\newtheorem{lemma}{Lemma}
\newtheorem{theorem}{Theorem}
\newcommand{\cG}{\mathcal{G}}
\newcommand{\cH}{\mathcal{H}}
\newcommand{\ex}{{\rm  ex}}
\newcommand{\Prob}{{\rm  Prob}}
\begin{document}
	\title{Extremal numbers for cycles in a hypercube}
		\date{\vspace{-5ex}}
	\author{
		Maria Axenovich
		\thanks{
			Karlsruhe Institute of Technology, Karlsruhe, Germany;
			email:
			\mbox{\texttt{maria.aksenovich@kit.edu}}. 
}
		}     
			
	\maketitle

	\begin{abstract}
	Let $\ex(Q_n, H)$ be the largest number of edges in a subgraph $G$ of a hypercube $Q_n$ such that there is no subgraph of $G$ isomorphic to $H$. We show that for any integer $k\geq 3$ 
	$$\ex(Q_n, C_{4k+2})= O(n^{\frac{5}{6} + \frac{1}{3(2k-2)}} 2^n).$$ 
\end{abstract}

	A {\it hypercube} $Q_n$, where $n$ is a natural number, is a graph on  a vertex set $\{A: A\subseteq [n]\}$ and an edge set consisting of all pairs $\{A,B\}$, where $A\subseteq B$ and $|A|=|B|-1$. Here, $[n]= \{1, \ldots, n\}$.
For a graph $H$, let 	$\ex(Q_n, H)$ be the largest number of edges in a subgraph $G$ of a hypercube $Q_n$ such that there is no subgraph of $G$ isomorphic to $H$.  The behaviour of the function $\ex(Q_n, H)$ is not well understood in general and it is not even known for what graphs $H$, $\ex(Q_n, H) = \Omega(||Q_n||)$, where $||Q_n||=n2^{n-1}$ denotes the number of edges of $Q_n$.  See the following papers on the topic: \cite{baber, balogh, AKS, ARSV, AM,TW, O}. Conlon \cite{C} observed a connection between extremal numbers in the hypercube and classical extremal numbers for uniform hypergraphs. That allowed to determine a large class of graphs $H$ such that $\ex(Q_n, H) = o(||Q_n||)$, i.e., the class of graphs with zero Tur\'an density.\\

In this note we focus on the case when $H$ is an even cycle. Here, a $2\ell$-cycle, denoted $C_{2\ell}$, is a graph on $2\ell$ vertices and $2\ell$ edges such that the edges are pairs of consecutive vertices with respect to some circular ordering of the vertices. It is known that $\ex(Q_n, C_4) = \Omega(||Q_n||)$ and $\ex(Q_n, C_6) = \Omega(||Q_n||)$, see Chung~\cite{chung}, Conder~\cite{conder}, and Brass {et al.}~\cite{brass}. Chung \cite{chung} showed that $\ex(Q_n, C_{4k})=o(||Q_n||)$, for any integer $k\geq 2$.
F\"uredi and \"Ozkahya~\cite{FO, FO2}   extended Chung's results by showing that  $\ex(Q_n, C_{4k+2})=o(||Q_n||)$, for any integer $k\geq 3$. Thus if $\ell\not\in\{2, 3, 5\}$, $\ex(Q_n, C_{2\ell}) = o(||Q_n||)$. While 
$\ex(Q_n, C_{2\ell}) =\Omega(||Q_n||)$ for $\ell\in \{2, 3\}$, it remains unclear whether $\ex(Q_n, C_{10}) =\Omega(||Q_n||)$. \\

Conlon \cite{C} noted that one can obtain a non-trivial lower bound on $\ex(Q_n, C_{2\ell})$ using Lov\'asz Local Lemma.  For $\ell \geq 2$, $\ex(Q_n, C_{2\ell}) = \Omega(n^{\frac{1}{2}  + \frac{1}{4\ell -2}}2^n)$, as  proven using deletion method, see Conlon' homepage  \cite{C1}. In Appendix, we include the proof of this lower bound using Lov\'asz  Local Lemma.\\

Considering more specific upper bounds for cycles with zero Tur\'an density,  Conlon \cite{C}  proved for $k \geq 2$ that
$\ex(Q_n, C_{4k}) \leq c_k  n^{-\frac{1}{2} + \frac{1}{2k}}  ||Q_n||.$  F\"uredi and \"Ozkahya~\cite{FO}, \cite{FO2}  showed that  $\ex(Q_n, C_{4k+2}) = O(n^{-q_k} ||Q_n|| ),$ where $q_k = \frac{1}{2k+1}$ for $k\in \{3,5, 7\}$, and $q_k = \frac{1}{16} - \frac{1}{16(k-1)}$ for any other integer  $k\geq 3$. Note that for large $k$, this upper bound on $\ex(Q_n, C_{4k+2})$ behaves as  $n^{\frac{15}{16}+ \epsilon}2^n$, for small positive $\epsilon$.  Here, we improve the leading term in the exponent from $\frac{15}{16}$ to $\frac{5}{6}$:

\begin{theorem}\label{main}
Let $\ell$ be an odd integer, $\ell \geq 7$. Then $\ex(Q_n, C_{2\ell})= O\left(n^{\frac{5}{6} + \frac{1}{3(\ell-3)}} 2^n\right)$.
\end{theorem}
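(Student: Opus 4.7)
The plan is to analyze a $C_{2\ell}$-free subgraph $G\subseteq Q_n$ by projecting along each coordinate direction, in the framework of Chung, Conlon, and F\"uredi--\"Ozkahya.

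For each direction $i\in[n]$, view $V(Q_n)$ as the disjoint union of a bottom layer (subsets of $[n]$ not containing $i$) and a top layer (subsets containing $i$). Define the auxiliary graph $H_i$ on the bottom layer, identified with $V(Q_{n-1})$, by: $\{u,v\}\in E(H_i)$ iff the four vertices $u,v,u\cup\{i\},v\cup\{i\}$ span a $4$-cycle of $G$. The structural claim I would first prove is that \emph{$H_i$ has no path on $\ell$ vertices}. Indeed, given such a path $u_0 u_1 \cdots u_{\ell-1}$, the sequence $u_0,u_1,\ldots,u_{\ell-1},u_{\ell-1}\cup\{i\},u_{\ell-2}\cup\{i\},\ldots,u_0\cup\{i\}$ visits $2\ell$ distinct vertices of $Q_n$, and all $2\ell$ edges connecting consecutive vertices are forced to lie in $G$ by the definition of $H_i$-edges, yielding a $C_{2\ell}$ in $G$.

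The next step is to turn the $P_\ell$-free property of each $H_i$ into an edge bound on $G$. Erd\H{o}s--Gallai immediately gives $|E(H_i)| = O_\ell(2^n)$, hence $\sum_i |E(H_i)| = O_\ell(n\, 2^n)$; since each $4$-cycle of $G$ contributes to exactly two of the $H_i$'s, this bounds the number of $4$-cycles of $G$ by $O_\ell(n\, 2^n)$. Because any two vertices of $Q_n$ share at most two common neighbors, a naive Cauchy--Schwarz passage from a $4$-cycle count to an edge count is too weak on its own; instead I would double-count pairs $(P,P')$ consisting of a path $P$ of length $(\ell-3)/2$ in the bottom layer and its vertex-by-vertex $i$-translate $P'$ in the top layer, both lying in $G$ and linked by direction-$i$ edges at the endpoints. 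The $P_\ell$-freeness of $H_i$ upper-bounds this count, while a Jensen-type lower bound in terms of $|E(G)|$ (applied to path-counts at each vertex) gives the matching inequality.

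The main obstacle is calibrating the path length and the convex averaging so that the two bounds meet at the exponent $\tfrac{5}{6}+\tfrac{1}{3(\ell-3)}$: the $\tfrac{5}{6}$ leading term corresponds to a $C_6$-extremal type saving in the double count, while the correction $\tfrac{1}{3(\ell-3)}$ reflects the gap between the length of the forbidden cycle and the length of the paths visible in the auxiliary graph. Handling degeneracies (paths collapsing across layers or using repeated directions) and ensuring that the Jensen step is not crippled by the sparsity of common neighborhoods in $Q_n$ will be the most delicate piece; I would expect to lose a $\ell$-dependent constant from this bookkeeping, which is harmless for the asserted asymptotics.
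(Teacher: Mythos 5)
Your opening observation is correct and nicely checked: if $u_0u_1\cdots u_{\ell-1}$ is a path in the auxiliary graph $H_i$, then the bottom path, its $i$-translate, and the two end rungs form a $C_{2\ell}$ in $G$, so each $H_i$ is $P_\ell$-free and Erd\H{o}s--Gallai bounds $\sum_i|E(H_i)|$ by $O_\ell(n2^n)$, i.e.\ bounds the number of $4$-cycles of $G$. But from that point on the proposal is a plan rather than a proof, and the plan has a structural flaw. The configurations $(P,P')$ you propose to double-count require only the two \emph{endpoint} rungs, so they are not paths (or subgraphs of paths) in $H_i$; $P_\ell$-freeness of $H_i$ says nothing about them, and no mechanism is given for the claimed upper bound. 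Worse, the matching lower bound cannot exist in the form you describe: a pair $(P,P')$ of length $(\ell-3)/2$ with its two end rungs is just a cycle of length $\ell-1$ in $Q_n$ (a $C_6$ when $\ell=7$), and $Q_n$ has $C_4$-free and $C_6$-free subgraphs with $\Omega(n2^n)$ edges. For such $G$ the configuration count is zero while $|E(G)|$ vastly exceeds the target $n^{5/6+\epsilon}2^n$, so no ``Jensen-type lower bound in terms of $|E(G)|$'' on this count can be valid in the range where you need it. Finally, you explicitly defer the derivation of the exponent $\frac56+\frac{1}{3(\ell-3)}$ to a ``calibration'' you do not perform; that calibration is where the entire content of the theorem lies, and your sketch contains no mechanism that would produce the characteristic $\frac13$ factors in the exponent.

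For comparison, the paper reaches $\frac56$ by a completely different route, and the $\frac13$ factors have a concrete source there: Conlon's theorem converts a bound $\ex(n,\cH)\le \alpha n^3$ for a $3$-partite $3$-uniform \emph{representation} $\cH$ of $C_{2\ell}$ into $\ex(Q_n,C_{2\ell})\le \alpha^{1/3}n2^n$, i.e.\ one gains a cube root. The work then consists of (i) exhibiting a representation of $C_{2\ell}$ inside a two-lift of a cycle of length $\ell-3$ with a pendant edge, (ii) a K\H{o}v\'ari--S\'os--Tur\'an/Erd\H{o}s-type double count showing $\ex(n,\cH)=O(n^{(\gamma+4)/2})$ when the base bipartite graph has $\ex(n,H)=O(n^\gamma)$, and (iii) Bondy--Simonovits giving $\gamma=1+\frac{2}{\ell-3}$. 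A projection-per-direction argument in the spirit of Chung and F\"uredi--\"Ozkahya, which is what you are attempting, is exactly what the paper is improving upon (their method tops out near exponent $\frac{15}{16}$), so even if your double count were repaired it is unlikely to reach $\frac56$ without an ingredient of this kind.
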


The main ingredient of the proof is also due to Conlon \cite{C} using partite representation. Our contribution here is finding a suitable representation and bounding the respective extremal hypergraph function.\\

We need some definitions and notations.  For a set $V$ and a non-negative integer $r$, we denote by $\binom{V}{r}$ the set of all $r$-element subsets of $V$.  An {\it $r$-graph} on a vertex set $V$ is a pair $(V, E)$, where $E\subseteq \binom{V}{r}$ is the set of edges.  Note that a graph is a $2$-graph. We denote the number of edges in an $r$-graph $H$ by  $||H||$. If $H$ is an $r$-graph, then $\ex(n, H)$ denotes the largest number of edges in an $n$-vertex $r$-graph not containing $H$ as a subgraph.
An $r$-graph is {\it $k$-partite} if its vertex set could be partitioned into  $k$  parts such that any edge  contains at most one vertex from each part. 
Edges $\{x,y\}$ and $\{x,y,z\}$ are written  simply as $xy$ and $xyz$, respectively. For a $3$-graph $\cH$ and its vertex $x$, the {\it link graph } $L(x)$ is the graph with no isolated vertices whose edge set  consists of all pairs $yz$ such that $xyz$ is an edge of $\cH$. Let $H$ be a bipartite graph. Let the {\it two-lift} of $H$ be the $3$-graph  $\cH$ with vertex set $\{a, b\}\cup V(H)$, where $a\not\in V(H)$ and $b\not\in V(H)$,  $a\neq b$,  each edge of $\cH$ contains exactly one of  $a$ or $b$,  and $L(a)=L(b)=H$.\\

For a non-negative $k$, the $k$th layer of $Q_n$ is the set $\binom{[n]}{k}$ of vertices. We say that a subgraph $H$ of $Q_n$ has  a {\it $k$-partite representation}  $\cH$  if $H$ is isomorphic to a graph $H'$ with a vertex set contained in  $\binom{[n]}{k-1}\cup \binom{[n]}{k}$ such that   $V(H') \cap \binom{[n]}{k}$ is an edge-set of a $k$-partite $k$-graph.  In this case, we denote such a hypergraph with no isolated vertices by $\cH(H)$ and call it a  $k$-partite {\it representation} of $H$.  For example, if $H$ is an $8$-cycle, it has a $2$-partite representation with edges 
$12, 23, 34, 14$ corresponding to an $8$-cycle  with vertices $1, 12, 2, 23, 3, 34, 4, 14, 1$, in order. A more general result by Conlon implies the following in the case when $k=3$.

\begin{theorem} [Conlon \cite{C}]\label{Conlon-rep-extremal}
If $H$ is a subgraph of $Q_n$ that has a  $3$-partite representation $\cH$ such that 
$\ex(n, \cH) \leq \alpha n^3$, then $\ex(Q_n, H) \leq \alpha^{1/3}2^nn$.
\end{theorem}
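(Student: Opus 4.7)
The plan is to construct, for each odd $\ell \geq 7$, a $3$-partite representation $\cH$ of $C_{2\ell}$ satisfying $\ex(n, \cH) = O(n^{5/2 + 1/(\ell - 3)})$, and then invoke Theorem~\ref{Conlon-rep-extremal}. Writing $k = (\ell - 3)/2 \geq 2$, my $\cH$ will be an $\ell$-edge sub-hypergraph of the two-lift $\T$ of the bipartite theta graph $F = \Theta_{k, 3}$, the graph formed by three internally vertex-disjoint paths of length $k$ joining two vertices $u, v$. Since $F$ is bipartite, $\T$ is $3$-partite with parts $\{a, b\}$ and the two color classes of $F$, and so is any sub-hypergraph of $\T$.

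To choose $\cH$, label the edges of the $i$th path of $F$ by $e_i^0, e_i^1, \ldots, e_i^{k-1}$ in order from $u$ to $v$. The ``intersection graph'' of $\T$---whose vertices are the $6k$ edges of $\T$, adjacent precisely when they share two elements---is $L(F) \Box K_2$, where $L(F)$ consists of two triangles (at $u$ and at $v$) joined by three internally disjoint paths $e_i^0 - e_i^1 - \cdots - e_i^{k-1}$. I let $\cH$ correspond to the following cycle of length $2k + 3 = \ell$ in $L(F) \Box K_2$: walk arm $1$ in layer $0$ from $(e_1^0, 0)$ to $(e_1^{k-1}, 0)$, take the triangle edge at $v$ to $(e_2^{k-1}, 0)$, walk arm $2$ reversed to $(e_2^0, 0)$, vertically switch to $(e_2^0, 1)$, take the triangle edge at $u$ in layer $1$ to $(e_3^0, 1)$, vertically switch to $(e_3^0, 0)$, and close with a triangle edge at $u$ back to $(e_1^0, 0)$. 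The $\ell$ cycle-vertices are distinct (they sit on different arms, at different $j$-indices, or in different layers), and the $\ell$ $2$-sets shared by consecutive hyperedges of $\cH$ are pairwise distinct---horizontal edges in layer $\epsilon$ contribute $2$-sets $\{a_\epsilon, w\}$ with $w \in V(F)$ (where $a_0 = a$, $a_1 = b$), vertical edges contribute $2$-sets contained in $V(F)$, and within each layer at most one triangle edge at $u$ and one at $v$ is used. Hence $\cH$ is a valid $3$-partite representation of $C_{2\ell}$ between layers $2$ and $3$ of $Q_n$.

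To bound $\ex(n, \cH)$, use $\cH \subseteq \T$ to reduce to bounding $\ex(n, \T)$, and run the standard two-lift counting. If a $3$-graph $\cG$ on $n$ vertices with $m$ edges avoids $\T$, then for every pair of distinct vertices $x, y \in V(\cG)$ the common link $L(x) \cap L(y)$ is $F$-free, so $|L(x) \cap L(y)| = O(n^{1 + 1/k})$ by the Faudree--Simonovits extremal bound $\ex(n, \Theta_{k, 3}) = O(n^{1 + 1/k})$. Summing the identity $\sum_{\{x, y\}} |L(x) \cap L(y)| = \sum_{pq} \binom{d(pq)}{2}$, where $d(pq)$ denotes the codegree of a $2$-set, and combining with $\sum_{pq} d(pq) = 3m$ via Cauchy--Schwarz gives $m^2/n^2 = O(n^{3 + 1/k})$, i.e.\ $m = O(n^{5/2 + 1/(2k)}) = O(n^{5/2 + 1/(\ell - 3)})$. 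Applying Theorem~\ref{Conlon-rep-extremal} with $\alpha = n^{-1/2 + 1/(\ell - 3)}$ then produces $\ex(Q_n, C_{2\ell}) \leq \alpha^{1/3} \cdot 2^n n = O(n^{5/6 + 1/(3(\ell - 3))} \, 2^n)$.

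The main obstacle is constructing the $\ell$-cycle: the two-lift of the ``obvious'' base $C_{\ell - 3}$ has intersection graph $C_{\ell - 3} \Box K_2$, which is bipartite (since $\ell - 3$ is even) and therefore admits no cycle of odd length $\ell$. Switching to $\Theta_{k, 3}$ introduces two triangles into the line graph, and the walk above uses two triangle edges at $u$ (one in each layer) together with two vertical layer-switches to realize odd length $\ell$ while keeping all $2$-sets distinct. A further subtlety is that within a single layer at most one triangle edge at $u$ (and one at $v$) may be used, lest two hyperedges of $\cH$ share the same $2$-set; the walk is designed with this constraint in mind, which is why the two triangle edges at $u$ are placed in different layers and why only a single triangle edge at $v$ appears.
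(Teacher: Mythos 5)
Your proposal does not address the statement it is supposed to prove. Theorem~\ref{Conlon-rep-extremal} is a transference result about the hypercube: it asserts that \emph{whenever} a graph $H$ admits a $3$-partite representation $\cH$ with $\ex(n,\cH)\leq \alpha n^3$, \emph{every} subgraph of $Q_n$ with more than $\alpha^{1/3}2^nn$ edges contains a copy of $H$. Any proof must therefore argue about subgraphs of $Q_n$; in Conlon's paper \cite{C} this is done by an averaging argument over random partitions of the coordinate set $[n]$, which converts a dense subgraph $G\subseteq Q_n$ into a dense $3$-partite $3$-graph in which a copy of $\cH$ is found and then pulled back to a copy of $H$ inside $G$. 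Your text contains no argument of this kind: nowhere do you work inside the hypercube, and in fact you explicitly \emph{invoke} Theorem~\ref{Conlon-rep-extremal} as a black box (``and then invoke Theorem~\ref{Conlon-rep-extremal}''). As a proof of the theorem in question this is circular, so the gap is not a repairable step but the entire content of the statement. (Note that the paper itself does not reprove this theorem either; it quotes it as the case $k=3$ of a more general result of Conlon.)

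What you have actually written is a proof of the paper's main result, Theorem~\ref{main}, and read as such it is essentially sound and runs parallel to the paper's Lemmas~\ref{embedding-cycle} and~\ref{3-vs-2-graph}: where the paper represents $C_{2\ell}$ inside the two-lift of an $(\ell-3)$-cycle with a pendant edge, you represent it inside the two-lift $\T$ of the theta graph $\Theta_{k,3}$ with $k=(\ell-3)/2$, using two triangle edges of the line graph placed in different layers to realize the odd length $\ell$; and your codegree/Cauchy--Schwarz bound $\ex(n,\T)=O\left(n^{5/2+1/(\ell-3)}\right)$ is the same double counting as the paper's $K_{2,q}$ argument in Lemma~\ref{3-vs-2-graph}, with Faudree--Simonovits for $\Theta_{k,3}$ playing the role of Bondy--Simonovits for $C_{\ell-3}$, yielding the identical final exponent $\frac{5}{6}+\frac{1}{3(\ell-3)}$. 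That is a legitimate alternative route to Theorem~\ref{main}, but it proves nothing about Theorem~\ref{Conlon-rep-extremal} itself.
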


\begin{lemma}\label{embedding-cycle}
A cycle $C_{2\ell}$, where $\ell \geq 7$ is an odd integer, has a $3$-partite representation $\cH_\ell$ such that $\cH_\ell $ is a subgraph of a two-lift of a cycle of length $\ell-3$ with  a pendant edge.
\end{lemma}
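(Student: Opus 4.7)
My plan is to give an explicit construction of $\cH_\ell$ inside the prescribed two-lift. Unpacking the definition, a $3$-partite representation of $C_{2\ell}$ amounts to an ordered list of $\ell$ distinct $3$-sets $w_1, \ldots, w_\ell$ which form the edges of a $3$-partite $3$-graph, satisfy $|w_i\cap w_{i+1}|=2$ cyclically, and have $\ell$ pairwise distinct intersections $v_i := w_{i-1}\cap w_i$; the corresponding $C_{2\ell}$ in $Q_n$ is then $v_1, w_1, v_2, w_2, \ldots, v_\ell, w_\ell, v_1$.

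Let $H$ be the cycle $c_1 c_2 \cdots c_{\ell-3} c_1$ together with a pendant edge $e_0 = c_1 d$. Since $\ell$ is odd and $\ell\geq 7$, the cycle length $\ell - 3$ is an even integer at least $4$, so $H$ is bipartite, and its two-lift $\cH$ (with new vertices $a, b$) is a $3$-partite $3$-graph, with parts $\{a,b\}$ and the two sides of the bipartition of $H$. Writing $f_i = c_i c_{i+1}$ with indices read cyclically, I will take $\cH_\ell\subseteq\cH$ to be the sub-hypergraph with edges
\[
w_i = \{a\}\cup f_i \text{ for } i=1,\ldots,\ell-3,\quad w_{\ell-2}=\{a\}\cup e_0,\quad w_{\ell-1}=\{b\}\cup e_0,\quad w_\ell=\{b\}\cup f_1.
\]
In words: traverse the cycle of $H$ using label $a$, step onto the pendant with label $a$, flip to label $b$ on the pendant, return to $f_1$ using label $b$, and close up by flipping to label $a$ on $f_1$.

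The verification is then routine. The $\ell$ sets $w_i$ are distinct since no two share both a label and an underlying $H$-edge. Each $|w_i \cap w_{i+1}|$ equals $2$ cyclically: the $a$-to-$a$ steps share $a$ together with the common vertex of adjacent $H$-edges, while the two label-flip pairs $(w_{\ell-2}, w_{\ell-1})$ and $(w_\ell, w_1)$ share the two endpoints of $e_0$ and of $f_1$, respectively. The resulting $v_i$'s are $\{c_1, c_2\}$, $\{a, c_2\}, \ldots, \{a, c_{\ell-3}\}, \{a, c_1\}, \{c_1, d\}, \{b, c_1\}$, which are pairwise distinct. Because $\cH_\ell$ sits inside the two-lift, $3$-partiteness is inherited automatically.

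The main conceptual obstacle is finding a valid walk of the correct length: two consecutive edges in any $3$-partite representation drawn from a two-lift must either share a label while having adjacent $H$-edges, or share an $H$-edge with flipped labels, and for the walk to close up the total number of label-flips must be even. Simply walking around the cycle of $H$ using one label yields only $\ell - 3$ edges, so the pendant is exploited to insert exactly two label-flips and one extra cycle-edge step, pushing the length up to $\ell$ without duplicating any edge of the two-lift.
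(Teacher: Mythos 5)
Your construction is correct and follows essentially the same approach as the paper: an explicit closed walk of length $\ell$ through the two-lift of the cycle-plus-pendant, verified by checking that consecutive triples meet in $\ell$ pairwise distinct pairs. The only difference is the particular walk chosen --- the paper splits the graph into link paths of lengths $\ell-4$ (for $a$) and $4$ (for $b$) sharing two edges, whereas you take $L(a)$ to be all of $H$ and $L(b)$ a path of length two through the pendant edge; both choices work equally well for the lemma as stated and for its use downstream.
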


\begin{proof}

We shall construct a cycle $C=C_{2\ell}$ for odd $\ell=2k+1$ with vertices in the third and second layer of $Q_n$. We shall define $A_\ell$  to be the ordered set of vertices of $C$ in the third layer, $B_C$ to  be a respective set of vertices in the second layer, and let  $C$ be  built by taking vertices from $A_\ell$ and $B_\ell$ alternatingly in order.
Let 
\begin{eqnarray*}
A_{2k+1} & = & (ax_1y_1,    ax_2y_1,  ax_2y_2,  \ldots, ax_{k-1}y_{k-2},  ax_{k-1}y_{k-1}, ~~ bx_{k-1}y_{k-1},   bx_{k-1}y_{0},  bx_1y_0,  bx_1y_1),\\
B_{2k+1} & = & (ay_1,    ax_2,  ay_2,  \ldots, ax_{k-1}, ~~  x_{k-1}y_{k-1},   bx_{k-1},  by_0,   bx_1,  x_1y_1).
\end{eqnarray*}

Then $C$ has a $3$-partite representation $\cH_\ell$ with the  edge set  equal to the  underlying unordered set of $A_\ell$.
Indeed, we see that  $H_\ell$ is a three-partite $3$-graph with parts $\{a,b\}$, $\{x_1, x_2, \ldots, x_{k-1}\}$, and $\{y_0, y_1, y_2, \ldots, y_{k-1}\}$. 
One can visualise $H_\ell$ by considering link graphs $L(a)$ and $L(b)$ - these graphs are paths of lengths $2k-3$ and $4$ respectively,   sharing only vertices $x_1, y_1, x_{k-1}$, and $y_{k-1}$ and edges $x_1y_1$ and $x_{k-1}y_{k-1}$.  The union of these two paths is a cycle of length $2(k-1)= \ell - 3$ and a pendant edge.
\end{proof}

\begin{lemma}\label{3-vs-2-graph}
Let $H$ be a bipartite graph and $\cH$ be a two-lift of $H$. If $\ex(n, H) =O(n^\gamma)$, then $\ex(n, \cH) = O(n^{\frac{\gamma+4}{2}})$.
\end{lemma}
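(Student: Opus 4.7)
The plan is to use a Cauchy--Schwarz codegree argument. Suppose $\cG$ is a $3$-graph on $n$ vertices, $\cH$-free, with $m=||\cG||$ edges; I will show $m = O(n^{(\gamma+4)/2})$. The guiding idea is: if $m$ is large enough, some pair of vertices $a,b$ has common link graph $L(a)\cap L(b)$ with more than $\ex(n,H)$ edges. Such a pair gives a copy of $H$ in $L(a)\cap L(b)$, which lifts back to a copy of $\cH$ in $\cG$ with $a,b$ as the two apex vertices.

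Concretely, for each pair $xy\in\binom{[n]}{2}$ I would let $d(xy)=|\{v:\{v,x,y\}\in\cG\}|$ be the codegree, so $\sum_{xy}d(xy)=3m$. By convexity of $\binom{\cdot}{2}$ (or Cauchy--Schwarz), $\sum_{xy}\binom{d(xy)}{2}=\Omega(m^2/n^2)$, provided $m$ exceeds $n^2$ by more than a constant factor (the complementary regime $m=O(n^2)$ already satisfies the target bound since $(\gamma+4)/2\geq 2$). Switching the order of summation,
\[
\sum_{xy}\binom{d(xy)}{2}=\sum_{\{a,b\}}|L(a)\cap L(b)|,
\]
because a contribution on the left --- a pair $xy$ together with two vertices $a,b$ with $axy,bxy\in\cG$ --- is exactly an edge $xy$ of $L(a)\cap L(b)$. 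Averaging over the $\binom{n}{2}$ pairs $\{a,b\}$ then yields some pair with $|L(a)\cap L(b)|=\Omega(m^2/n^4)$.

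Choosing $m\geq Cn^{(\gamma+4)/2}$ for a sufficiently large constant $C=C(H)$ makes this exceed $\ex(n,H)$, so $L(a)\cap L(b)$ contains a copy of $H$. Since $L(v)\subseteq\binom{[n]\setminus\{v\}}{2}$ for every vertex $v$, this copy automatically lies in $[n]\setminus\{a,b\}$, so $a$ and $b$ are free to serve as the two apex vertices of the two-lift, yielding a copy of $\cH$ in $\cG$ and contradicting $\cH$-freeness. I do not expect any serious obstacle: the argument is a standard codegree count, and the only points requiring mild care are excising the trivial low-density range $m=O(n^2)$ so that the convexity step genuinely produces a gain, and observing that $a,b\notin V(H)$ is automatic from the definition of a link graph.
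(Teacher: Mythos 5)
Your proposal is correct and is essentially the paper's own argument: the paper counts "stars with $2$ edges and center in $U=\binom{V}{2}$" in an auxiliary bipartite incidence graph, which is exactly your sum $\sum_{xy}\binom{d(xy)}{2}$ over codegrees, and both proofs conclude by finding a pair $a,b$ with $|L(a)\cap L(b)|>\ex(n,H)$ and lifting a copy of $H$ from the common link. Your explicit handling of the low-density regime $m=O(n^2)$ is a minor extra care the paper leaves implicit.
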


\begin{proof}
Let $\gamma, c' >0$, $n$ sufficiently large, and $\ex(n, H) < c'n^\gamma$.   Let $\cG$ be a $3$-graph with vertex set $V$, $|V|=n$ and $cn^\sigma$ edges, where $\sigma = \frac{\gamma+4}{2}$ and $c$ is a constant, $c'=c^2$.
We shall show that $\cG$ contains a two-lift $\cH$ of $H$.

Let $U= \binom{V}{2}$ and $G$ be a bipartite graph with partite sets $V$ and $U$ and 
edge set $\{ \{x, zz'\}:  x\in V, zz'\in U, xzz'\in E(\cG)\}$. 
Then $||G|| = 3||\cG||=3cn^{\sigma}$. 
Let $q$ be the largest integer such that $G$ contains a complete bipartite subgraph with two vertices in $V$ and $q$ vertices in $U$. 
We shall show that  $q\geq  n^{2\sigma-4}$.

To do this, we shall apply a standard double counting and reduction to smaller uniformity, already used by Erd\H{o}s \cite{E64} in bounding the extremal function for complete $k$-partite $k$-graphs.  Let $X$ be the number of  stars in $G$  with $2$ edges and center in $U$.  Then $X\geq \sum_{u\in U} \binom{deg_G(u)}{2} \geq |U| \binom{||G||/|U|}{2} \geq c^2 n^2 n^{2(\sigma-2)}$. 
On the other hand, since every pair of vertices from $V$ belongs to at most $q$ such stars,  $X \leq q \binom{n}{2} \leq qn^2$. Thus $q \geq c^2  n^{2\sigma - 4} = c' n^\gamma$. 
Let $Q\subseteq U$, $|Q|=q$, $a, b\in V$, such that $a,b,$ and $Q$ induce $K_{2, q}$ in $G$. Then 
$a,b$ and $Q$ correspond to a two-lift $\cH'$ of a graph $H'$ with the edge-set $Q$. 
Since $|Q|=q = c'n^\gamma >\ex(n, H)$, we see that $H$ is a subgraph of $H'$. Thus $\cG$ contains a two-lift of $H$.
\end{proof}

\begin{proof}[Proof of Theorem \ref{main}]
It was shown by Erd\H{o}s \cite{E} as well as by Bondy and Simonovits \cite{BS},  that $\ex(n, C_{2m}) = O(n^{1 + \frac{1}{m}})$.
This implies that $\ex(n, H) \leq O(n^{1+\frac{2}{\ell-3}})$, where $H$ is a cycle of length $\ell-3$ with a pendant edge, for odd $\ell\geq 7$. Now, using Theorem \ref{Conlon-rep-extremal} and Lemmas \ref{embedding-cycle} and \ref{3-vs-2-graph}, 
 with $\gamma = 1+\frac{2}{\ell-3}$, $\frac{\gamma+4}{2} =  \frac{5 + \frac{2}{\ell-3}}{2}$, and $\alpha =-\frac{1}{2} + \frac{1}{\ell-3}$, we have $\ex(Q_n, C_{2\ell}) \leq O(\alpha^{1/3}n2^n)= O(n^{1-\frac{1}{6} + \frac{1}{3(\ell-3)}} 2^n)$.
\end{proof}

{\bf Remark} ~~ After uploading this note, the author learned about a better bound obtained by Istvan Tomon  \cite{T},  page 28.,  as a corollary of a much more general result on topologically defined hypergraphs:
$\ex(Q_n, C_{2\ell})= O(n^{\frac{2}{3}+ \delta} 2^n), $ for some $\delta = O(\frac{\log \ell}{\ell})$. \\

{\bf Acknowledgements} ~~The author thanks David Conlon for bringing her attention to \cite{C1}, Frithjof Marquardt for discussions, and Zixiang Xu for pointing out \cite{T}. The research was supported in part by the DFG grant FKZ AX 93/2-1.

\section{Appendix}
\begin{theorem}
For any integer $\ell$, $\ell \geq 2$, $ex(Q_n, C_{2\ell}) = \Omega(n^{\frac{1}{2}  + \frac{1}{4\ell -2}}2^n)$.
\end{theorem}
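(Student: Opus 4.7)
The plan is to apply the Lov\'asz Local Lemma to a $q$-random subgraph $R$ of $Q_n$, where each edge is included independently with probability $q = c\, n^{-(\ell-1)/(2\ell-1)}$ for a small constant $c>0$. For each copy $F$ of $C_{2\ell}$ in $Q_n$, let $A_F$ denote the bad event that $F\subseteq R$; then $\Pr(A_F)=q^{2\ell}$. The target is a $C_{2\ell}$-free realization of $R$ with at least $\Omega(q\, ||Q_n||) = \Omega(n^{1/2+1/(4\ell-2)}\,2^n)$ edges. Two elementary counts will be needed: every $C_{2\ell}$ in $Q_n$ is a closed walk of length $2\ell$ in which each coordinate direction is used an even number of times, so a standard pair-matching argument yields $O(n^\ell\,2^n)$ copies in total and at most $D=O(n^{\ell-1})$ copies through any fixed edge.

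The main step is to show that for each edge $e\in E(Q_n)$, $\Pr\bigl(e\in R \text{ and } R \text{ is } C_{2\ell}\text{-free}\bigr)\ge q/2$. Fix $e$ and condition on $e\in R$. The other edge indicators remain independent; each $A_F$ with $F\ni e$ has conditional probability $q^{2\ell-1}$, each $A_F$ with $F\not\ni e$ has conditional probability $q^{2\ell}$, and $A_F$ depends only on those $A_{F'}$ sharing an edge with $F$ other than $e$, so the dependency degree is at most $2\ell D$. With our choice of $q$ one has $q^{2\ell-1}\cdot 2\ell D = O(c^{2\ell-1})$, small enough to satisfy the symmetric LLL condition uniformly. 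Hence $\Pr\bigl(R \text{ is } C_{2\ell}\text{-free}\bigm| e\in R\bigr)\ge 1/2$, and summing over all edges gives
\[
\mathbb{E}\bigl[|R|\cdot\mathbf{1}_{R \text{ is } C_{2\ell}\text{-free}}\bigr]\ \ge\ \tfrac{q}{2}\,||Q_n||\ =\ \Omega\bigl(n^{1/2+1/(4\ell-2)}\,2^n\bigr),
\]
so some outcome realizes both properties simultaneously, which proves the theorem.

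The main obstacle is that LLL in its raw form guarantees a $C_{2\ell}$-free realization without any lower bound on $|R|$: conditioning on $\bigcap_F \overline{A_F}$ could in principle drive $\mathbb{E}[|R|]$ far below $q\,||Q_n||$, so a direct existence application of LLL does not suffice. The edge-by-edge conditioning trick above circumvents this, but the price is that LLL must now accommodate the larger conditional probability $q^{2\ell-1}$ rather than $q^{2\ell}$, which forces $q$ to satisfy $q^{2\ell-1}n^{\ell-1}=O(1)$ and yields exactly the density matching the deletion method rather than a stronger bound.
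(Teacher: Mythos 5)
Your proposal correctly isolates the central difficulty---that the Local Lemma by itself certifies a $C_{2\ell}$-free outcome with no control on its size---but the edge-by-edge conditioning you use to repair this fails, and the key inequality you assert is in fact false. The symmetric LLL concludes only that $\Prob\bigl(\bigcap_F \overline{A_F}\bigr)>0$; its quantitative form gives the lower bound $\prod_F(1-x_F)$ with $x_F=1/(D+1)$, and since the number of events is $N(Q_n,C_{2\ell})=\Theta(n^{\ell}2^n)$ while the dependency degree is only $O(n^{\ell-1})$, that lower bound is of order $\exp(-\Theta(n2^n))$, nowhere near $1/2$. Worse, the claim cannot be repaired: with $q=cn^{-(\ell-1)/(2\ell-1)}$ the expected number of copies of $C_{2\ell}$ inside $R$ is $q^{2\ell}\cdot\Theta(n^{\ell}2^n)=\Theta\bigl(c^{2\ell}n^{\ell/(2\ell-1)}2^n\bigr)$, which tends to infinity, so $R$ is almost surely \emph{not} $C_{2\ell}$-free and $\Prob\bigl(R\text{ is }C_{2\ell}\text{-free}\mid e\in R\bigr)$ is astronomically small rather than at least $1/2$. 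Conditioning on $e\in R$ and observing that the remaining edge indicators stay independent does not help: verifying the symmetric LLL hypothesis never yields a constant lower bound on the probability of avoiding all bad events when their number vastly exceeds the dependency degree.

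The paper sidesteps the size issue by a different device: it randomly colors the edges of $Q_n$ with $1/p$ colors, $p=cn^{-(\ell-1)/(2\ell-1)}$, and takes $A_C$ to be the event that a copy $C$ of $C_{2\ell}$ is monochromatic, which has probability $p^{2\ell-1}$. The bare existential form of the LLL (which is all that is needed) produces a coloring with no monochromatic $C_{2\ell}$, and then the largest color class is \emph{deterministically} a $C_{2\ell}$-free subgraph with at least $p\,||Q_n||$ edges, since the $1/p$ classes partition the edge set. Alternatively, the deletion method you mention at the end does work with your random subgraph $R$: delete one edge from each copy of $C_{2\ell}$ in $R$; the expected number of deletions is $O\bigl(c^{2\ell}n^{\ell/(2\ell-1)}2^n\bigr)$, which for small $c$ is at most half of $\mathbb{E}|R|=\Theta\bigl(c\,n^{\ell/(2\ell-1)}2^n\bigr)$. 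Either route proves the theorem; your LLL-with-conditioning route does not. Your counting of copies of $C_{2\ell}$ in total and through a fixed edge, and your choice of the exponent $(\ell-1)/(2\ell-1)$, do agree with the paper.
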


\begin{proof}
Consider a  random edge-coloring of   $Q_n$, where the colors for the edges  are chosen i.i.d.  with probability $p= cn^{-a}$ for each of the $1/p$ colors, where the constants $c$ and $a$ will be determined later.
For a copy $C$ or $C_{2\ell}$, let $A_C$ be the probability of a bad event that all edges of $C$ have the same color. 
Denoting $P=\Prob (A_C)$, we have $P \leq p^{2\ell-1}$. Note that $A_C$ is mutually  independent of  all events $A_{C'}$ except for those $C'$s  that share an edge with $C$.\\

To determine the maximum degree in the dependency graph, consider first $N(Q_n, C_{2\ell})$, the number of $2\ell$-cycles in $Q_n$.  Represent  the vertices of $Q_n$ as binary vectors, consider a $2\ell$-cycle $C$ and  the set  $S_C$ of all positions at which  some two adjacent in $C$ vertices differ. Then  $|S_C|\leq \ell$. Since there are at most $n^\ell$ ways to choose the set $S_C$ and at most $2^n$ ways to fix the remaining positions, in which all vertices of $C$ coincide, we have 
that $N(Q_n, C_{2\ell})\leq c_\ell n^{\ell} 2^n$.

Let $x$ be the number of $2\ell$-cycles containing a given edge. By symmetry of $Q_n$ we see that $x$ is independent of the choice of the edge.  Let $X$ be the set of pairs $(C, e)$, where $C$ is a cycle of length $2\ell$ in $Q_n$ and $e$ is an edge of $C$. Then $|X| = N(Q_n, C_{2\ell})\cdot 2\ell$ and $|X|= n2^{n-1} \cdot x$, implying that   $x = N(Q_n, C_{2\ell})\cdot 2\ell/ (n2^{n-1}) \leq   c_\ell n^{\ell} 2^n 2\ell/ (n2^{n-1})= c'_\ell n^{\ell-1}.$ 
Thus the maximum degree $D$ of the dependency graph satisfies  $D\leq 2\ell x \leq 2\ell \cdot c'_\ell n^{\ell-1}$.  \\

Observe that  $P(D+1)e  = (cn^{-a})^{2\ell-1}  (c'_\ell n^{\ell-1}+1) e$, it is less than $1$ if the constant $c$ in the definition of $p$ is sufficiently small and the exponent  $-a(2\ell -1)+{\ell -1} = 0$, i.e., if $a=(\ell-1)/(2\ell -1)$. Using Lov\'asz Local Lemma, 
we have that with a positive probability there are no bad events $A_C$. A largest color class gives a $C_{2\ell}$-free subgraph of $Q_n$ with at least $p||Q_n|| = cn^{1-a}2^n = 
cn^{\frac{\ell}{2\ell-1}} 2^n = cn^{\frac{1}{2}  + \frac{1}{4\ell -2}}2^n$ edges.
\end{proof}

\end{document}